\newtheorem{thm}{Theorem}[section]
\newtheorem{lem}[thm]{Lemma}
\newtheorem{prop}[thm]{Proposition}
\theoremstyle{definition}
\theoremstyle{remark}
\newtheorem{rem}[thm]{Remark}
\numberwithin{equation}{section}
\newcommand{\Real}{{\bf R}}
\newcommand{\eps}{\varepsilon}
\def\R{{\bf R}}
\def\S{{\bf S}}
\def\d{\displaystyle}
\def\e{{\varepsilon}}
\def\vp{\varphi}
\def\p{\partial}
\begin{document}

\title[]
{On the Blow-up for Critical Semilinear Wave Equations with Damping in the
Scattering Case}
\author{Kyouhei Wakasa and Borislav Yordanov}

\address{Department of Mathematics, Faculty of Science and Technology, 
Tokyo University of Science, 2641 Yamazaki, Noda-shi, Chiba, 278-8510, Japan}
\address{Office of International Affairs, Hokkaido University, 
Kita 15, Nishi 8, Kita-ku, Sapporo, Hokkaido 060-0815, Japan 
and Institute of Mathematics, Sofia}
\vskip10pt
\address{}
\email{wakasa\_kyouhei@ma.noda.tus.ac.jp}
\email{byordanov@oia.hokudai.ac.jp}



\date{\today}
\subjclass{} \keywords{}

\begin{abstract}
We consider the Cauchy problem for semilinear wave equations with variable coefficients and
time-dependent scattering damping in $\Real^n$, where $n\geq 2$.
It is expected that the critical exponent will be Strauss' number $p_0(n)$,  which is
also the one for semilinear wave equations without damping terms.

Lai and Takamura~\cite{LT} have obtained the blow-up part, together with
the upper bound of lifespan, in the sub-critical case $p<p_0(n)$.
In this paper, we extend their results to the critical case $p=p_0(n)$.
The proof is based on \cite{WY}, which concerns the blow-up and upper bound of lifespan
for critical semilinear wave equations with variable coefficients.
\end{abstract}
\maketitle


\section{Introduction}

We study the blow-up problem for critical semilinar wave equations
with variable coefficients and scattering damping depending on time.
The perturbations of Laplacian are uniformly elliptic operators
$$\d \Delta_g =\sum_{i,j=1}^n\partial_{x_i}g_{ij}(x)\partial_{x_j}$$
whose coefficients satisfy, with some $\alpha>0,$ the following:
\begin{equation}
\label{g}
g_{ij}\in C^1(\Real^n),\quad |\nabla g_{ij}(x)|+|g_{ij}(x)-\delta_{ij}|=O(e^{-\alpha|x|})  \hbox{ as } |x|\rightarrow\infty.
\end{equation}
The admissible damping coefficients are $a\in C([0,\infty))$, such that
\begin{equation}
\label{asm1}
\forall t\geq 0\quad a(t)\geq 0 \ \hbox{ and } \ \int_0^\infty a(t) dt<\infty.
\end{equation}

For $n\geq 2$ and $p>1$, we consider the Cauchy problem
\begin{eqnarray}
\label{main}
 u_{tt}-\Delta_g u+a(t)u_t=|u|^{p},& & x\in \Real^n,\quad t>0,\\
\label{data}
u|_{t=0}=\eps u_0,\quad u_t|_{t=0}=\eps u_1,& & x\in \Real^n,
\end{eqnarray}
where $u_0,\: u_1\in C_0^{\infty}(\R^n)$ and $\e>0$ is a small parameter. Our results concern
only the critical case $p=p_0(n)$ with Strauss' exponent defined in (\ref{strauss}) below.

Let us briefly review previous results concerning (\ref{main}) with $g_{ij}=\delta_{ij}$ and various types of
damping $a$.
When $a(t)=1$, Todorova and Yordanov \cite{TY} showed that the solution
of (\ref{main}) blows up in finite time if $1<p<p_F(n)$, where $p_F(n)=1+2/n$ is the Fujita exponent known
to be the critical exponent for the semilinear heat equation.
The same work also obtained global existence for $p>p_F(n)$.
Finally, Zhang \cite{Z} established the blow-up in the critical case $p=p_F(n)$.

The other typical example of effective damping is $a(t)=\mu/(1+t)^{\beta}$ with $\mu>0$ and $\beta\in\R$.
When $-1<\beta<1$, Lin, Nishihara and Zhai \cite{LNZ12} obtained the expected blow-up result, if $1<p\le p_F(n)$,
and global existence result, if $p>p_F(n)$; see also D'Abbicco, S.Lucente and M.Reissig~\cite{DLR13}.

In the case of critical decay $\beta=1$, there are several works about finite time blow-up and global existence.
Wakasugi~\cite{WY14_scale} showed the blow-up, if $1<p\le p_F(n)$ and $\mu>1$ or $1<p\le 1+2/(n+\mu-1)$ and $0<\mu\le 1$.
Moreover, D'Abbicco~\cite{DABI} verified the global existence, if $p>p_F(n)$ and
$\mu$ satisfies one of the following: $\mu\ge5/3$ for $n=1$, $\mu\ge3$ for $n=2$ and
$\mu\ge n+2$ for $n\ge3$. An interesting observation is that the Liouville substitution $w(x,t):=(1+t)^{\mu/2}u(x,t)$
transforms the damped wave equation (\ref{main}) into the Klein-Gordon equation
\[
w_{tt}-\Delta w+\frac{\mu(2-\mu)}{4(1+t)^2}w=\frac{|w|^p}{(1+t)^{\mu(p-1)/2}}.
\]
Thus, one expects that the critical exponent for $\mu=2$ is related to
that of the semilinear wave equation. 
D'Abbicco, Lucente and Reissig \cite{DLR14} have actually obtained
the corresponding blow-up result, if $1<p<p_c(n):=\max\left\{p_{F}(n),\ p_0(n+2)\right\}$ and
\begin{equation}
\label{strauss}
p_0(n):=\frac{n+1+\sqrt{n^2+10n-7}}{2(n-1)}
\end{equation}
is the so-called Strauss exponent, the positive root of the quadratic equation
\begin{equation}
\label{gamma}
\gamma(p,n):=2+(n+1)p-(n-1)p^2=0.
\end{equation}
Their work also showed the existence of global classical solutions for small $\e>0$, if $p>p_c(n)$ and either $n=2$ or $n=3$ and the data are radially symmetric. 
Finally, we mention that our original equations (\ref{main}) is related to semilinear wave equations in 
the Einstein-de Sitter spacetime considered by Galstian $\&$ Yagdjian \cite{GY17}.

We recall that $p_0(n)$ in (\ref{strauss}) is the critical exponent for the semilinear wave equation conjectured by Strauss \cite{St81}. The hypothesis has been verified in several cases; see \cite{WY} and the references therein. 
A related problem is to estimate the lifespan, or the maximal existence time $T_\eps$ of solutions to (\ref{main}), (\ref{data}) in the energy space
$C([0,T_\eps), H^1(\Real^n))\cap C^1([0,T_\eps), L^2(\Real^n)).$

Lai, Takamura and Wakasa \cite{LTW} have obtained the blow-up part of Strauss' conjecture, together with an upper bound of the lifespan $T_\eps$, for (\ref{main}),  (\ref{data})  in the case $n\ge2$, $0<\mu<(n^2+n+2)/2(n+2)$ and $p_F(n)\le p<p_0(n+2\mu)$.
Later, Ikeda and Sobajima \cite{IS} were able to replace these conditions by less restrictive
$0<\mu<(n^2+n+2)/(n+2)$ and $p_F(n)\le p\le p_0(n+\mu)$. In addition, they have derived an
upper bound on the lifespan. Tu and Lin \cite{TL1}, \cite{TL2} have improved the estimates of $T_\eps$ in \cite{IS} recently.

For $\beta\leq -1$, the long time behavior of solutions to (\ref{main}),  (\ref{data}) is quite different. When $\beta=-1$, Wakasugi \cite{W17} has obtained the global existence for exponents
$p_F(n)<p<n/[n-2]_+$, where
\[
[n-2]_{+}:=
\left\{
\begin{array}{lll}
\infty &\mbox{for}& n=1,2,\\
m/(n-2) &\mbox{for}& n\ge3.
\end{array}
\right.
\]
Ikeda and Wakasugi \cite{IW} have proved that the global existence actually holds for any $p>1$ when $\beta<-1$.

For $\beta>1$, we expect the critical exponent to be exactly the Strauss exponent.
In fact, Lai and Takamura \cite{LT} have shown that certain solutions of (\ref{main}), (\ref{data}) blow up in finite time when $1<p<p_0(n)$. 
Moreover, Liu and Wang \cite{LW18} have just obtained the global existence results for $n=3,4$ and $p>p_0(n)$ 
on asymptotically Euclidean manifolds.

If $T_\e$ denotes the lifespan of these solutions, then \cite{LT} have also
given  the upper bound $T_\e \le C\e^{-2p(p-1)/\gamma(p,n)}$ for $n\ge2$ and $1<p<p_0(n)$.
This result is probably sharp, since Takamura~\cite{Ta15} proved the same type of estimate in the sub-critical case of Strauss' conjecture for semilinear wave equations without damping. 
However, both the conjecture and lifespan bound
remained open problems in the critical case $p=p_0(n)$.

The purpose of this paper is to verify the blow-up for $p=p_0(n)$ and to give a proof that extends
to more general damping, including $a(t)\sim(1+t)^{-\beta}$ with $\beta>1$.
We also succeed to derive an exponential type upper bound on the lifespan $T_\eps$,
which is the same as that of the Strauss conjecture in the conservative critical case.
Such results are consistent with our knowledge of the linear problem corresponding to (\ref{main}), (\ref{data});
Wirth \cite{Wir1} has shown that energy space solutions scatter, that is approach solutions to the free wave equations, as $t\rightarrow\infty.$

\begin{thm}
\label{thm:main}
Let $n\ge2$, $p=p_0(n)$ and $a(t)$ satisfy (\ref{asm1}).
Assume that both $u_0\in H^1(\R^n)$ and $u_1\in L^2(\R^n)$ are nonnegative, do
not vanish identically and have supports in the ball $\{x\in\R^n:\ |x|\le R_0 \}$,
where $R_0>1$.

If (\ref{main}) has a solution $(u,u_t)\in C([0,T_\eps), H^1(\Real^n)\times L^2(\Real^n))$, such that
\begin{equation}
\label{support}
\mbox{\rm supp}(u,u_t) \subset \{(x,t)\in\R^n\times[0,T_\eps)\ :\ |x|\le t+R\},
\end{equation}
with $R\geq R_0$, then $T_\eps<\infty$. Moreover,
there exist constants $\e_0=\e_0(u_0,u_1,n,p,R,a)$ and $K=K(u_0,u_1,n,p,R,a)$, such that
\begin{equation}
\label{thm:lifespan}
T_\e\le\exp\left(K\e^{-p(p-1)}\right)\ \hbox{ for }\ 0<\e\le\e_0.
\end{equation}
\end{thm}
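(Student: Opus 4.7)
The plan is to adapt the critical-case blow-up method of \cite{WY} to the damped setting by first absorbing the scattering term into a bounded multiplier. Since $\int_0^\infty a(s)\,ds<\infty$ by (\ref{asm1}), set
\[
m(t):=\exp\!\left(\int_0^t a(s)\,ds\right),\qquad 1\le m(t)\le m_\infty:=\exp\!\left(\int_0^\infty a(s)\,ds\right)<\infty.
\]
Then the equation in (\ref{main}) is equivalent to $(mu_t)_t-m\Delta_g u=m|u|^p$, a conservative-type wave equation with a uniformly bounded, eventually stabilizing time weight; this removes the first-order term while preserving the structure required for Yordanov--Zhang type test-function estimates.

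Following \cite{WY}, I would construct a positive function $\Phi\in C^2(\R^n)$ solving $\Delta_g\Phi=\Phi$, with asymptotic profile $\Phi(x)\sim |x|^{-(n-1)/2}e^{|x|}$ as $|x|\to\infty$. Its existence rests on the exponential decay in (\ref{g}) via a perturbative construction around the classical Yordanov--Zhang function $\int_{S^{n-1}}e^{x\cdot\omega}\,d\omega$. Setting $\phi(x,t):=\Phi(x)e^{-t}$ and introducing
\[
F_0(t):=\int_{\R^n}u(x,t)\,dx,\qquad F_1(t):=\int_{\R^n}u(x,t)\,\phi(x,t)\,dx,
\]
I would test the rewritten equation against $1$ and against $\phi$, integrate twice in time, and use the support property (\ref{support}) together with the nonnegativity of the data to derive both the initial bounds $F_0(t)\ge C\e$ on $t\ge T_0$ and $F_1(t)\ge C\e$, and a frame integral inequality of the form
\[
F_0(t)\ge C\int_{T_0}^{t}(t-s)\int_{T_0}^{s}(\tau+R)^{\alpha(n,p)}F_0(\tau)^p\,d\tau\,ds.
\]
Here the weight exponent $\alpha(n,p)$ is arranged so that $\alpha(n,p_0(n))=-1$ by (\ref{gamma}), and the damping enters only through multiplicative constants controlled by $1$ and $m_\infty$.

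The core of the argument is a Kato-type iteration in the critical regime $p=p_0(n)$. The identity $\gamma(p_0(n),n)=0$ from (\ref{gamma}) makes the natural polynomial iteration exactly balanced, so each step produces a logarithmic rather than a polynomial gain. Starting with $F_0(t)\ge D_0\e$ and inserting an ansatz $F_0(t)\ge D_j(\log(t/T_0))^{a_j}$ valid for $t\ge T_j$, the recursion yields $a_{j+1}=pa_j+1$ together with $\log D_{j+1}=p\log D_j-O(\log(a_j+1))$. Iterating and optimizing in $j$, the resulting lower bound is incompatible with the finiteness of $F_0$ on any interval of existence unless $\log T_\e\le K\e^{-p(p-1)}$, which is exactly (\ref{thm:lifespan}).

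The main obstacle I anticipate is the careful bookkeeping of constants through the logarithmic iteration: the final bound must depend on $a$ only through the bounded quantity $m_\infty$, and the exponent of $\e$ must come out as exactly $p(p-1)$, matching the conservative case. A secondary technical point is verifying that the variable-coefficient test function $\Phi$ gives sharp enough integral estimates to reduce, at the critical exponent, to the decisive $(\tau+R)^{-1}$ responsible for the logarithmic iteration; this is where the scattering-type bounds already developed in \cite{WY} play the essential role.
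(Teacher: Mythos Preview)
Your route diverges from the paper's in a substantive way. The paper does \emph{not} absorb the damping by the global multiplier $m(t)$ and then run the classical $F_0/F_1$ Kato scheme. Instead it keeps the damping at the level of the associated ODE: for each $\lambda>0$ it studies $L_a=\partial_t^2+a(t)\partial_t-\lambda^2$ and proves (Lemma~\ref{y1y2}) that the fundamental solutions satisfy $y_1(t,s;\lambda)\ge e^{-\|a\|_{L^1}}\cosh\lambda(t-s)$ and $y_2(t,s;\lambda)\ge e^{-2\|a\|_{L^1}}\lambda^{-1}\sinh\lambda(t-s)$. This is exactly the step where the scattering hypothesis (\ref{asm1}) is spent, and it lets the entire \cite{WY} machinery (the $\lambda$--averaged test functions $\xi_q,\eta_q$, the functional $F(t)=\int u\,\eta_q(x,t,t)\,dx$ with $q=(n-1)/2-1/p$, and the resulting frame $F(t)\ge C\langle t\rangle^{-1}\!\int_0^t\!(t-s)\langle s\rangle^{-1}(\log\langle s\rangle)^{-(p-1)}F(s)^p\,ds$) carry over verbatim up to the constant $e^{-2\|a\|_{L^1}}$.

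The gap in your sketch is precisely at the frame. From the pair $(F_0,F_1)$ and H\"older over the light cone one obtains $F_0''\gtrsim (t+R)^{-n(p-1)}F_0^p$, i.e.\ weight exponent $-n(p-1)$, not $-1$; there is no mechanism in what you wrote that converts this into the claimed $(\tau+R)^{-1}$. Moreover, even granting your frame $F_0(t)\ge C\!\int\!\!\int(\tau+R)^{-1}F_0(\tau)^p$, inserting the ansatz $F_0\ge D_j(\log t)^{a_j}$ produces a factor $(t-\tau)$ (or $(t-\tau)^2$) on top of the logarithm, so the iteration gains powers of $t$, not pure logarithms, and cannot yield the exponential lifespan bound (\ref{thm:lifespan}). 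What makes the critical case close in \cite{WY} and here is that $\eta_q$ already carries a decay $\langle t\rangle^{-(n-1)/2}\langle t-|x|\rangle^{(n-3)/2-q}$ (Lemma~\ref{lem2}(iii)); the H\"older step against this weight is what manufactures both the outer $\langle t\rangle^{-1}$ and the $(\log\langle s\rangle)^{p-1}$ denominator in the frame, and no single-$\lambda$ test function $\Phi e^{-t}$ can do that. Your multiplier idea is morally the same bookkeeping as Lemma~\ref{y1y2}, but it is applied at the wrong layer: you still need the $\lambda$--integrated test functions (or an equivalent device such as the slicing iteration) to reach the critical frame, and that step is missing.
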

\begin{rem}
The lifespan estimates is the same as that of the Strauss conjecture in the critical case
of semilinear  wave equations without damping.
For details, see the introduction in \cite{WY}. 
We also note that Liu and Wang \cite{LW18} have obtained the sharp lower bound of the lifespan, 
$T_\e \ge \exp(c\e^{-2})$ if $n=4$ and $p=p_0(4)=2$. 

\end{rem}

Our proof is based on the approach of Wakasa and Yordanov \cite{WY}. Averaging the solution with respect to
a suitable test function, we derive a second-order dissipative ODE which corresponds to equation (\ref{main}).
The key point is to establish lower bounds for the fundamental system of solutions to this ODE; see Lemma \ref{y1y2}.
As a consequence, we can follow \cite{WY} and obtain the same nonlinear integral inequality.
The final blow-up argument also repeats the iteration argument of \cite{WY}.

\section{Test Functions}

Similarly to the proof of \cite{WY}, we first consider the following elliptic problem:
\begin{equation}
\label{eigen-pr}
\Delta_g \varphi_\lambda = \lambda^2\varphi_{\lambda},\quad x\in \Real^n,
\end{equation}
where $\lambda\in (0,\alpha/2].$ As $\lambda|x|\rightarrow\infty$, these $\vp_\lambda(x)$ are asymptotically given by $\varphi(\lambda x),$ with $\vp$ being the standard radial solution to the unperturbed equation $\Delta \vp =\vp$:
\begin{equation}
\label{eigen}
\varphi(x)=\int_{\S^{n-1}}e^{x\cdot\omega}dS_{\omega}\sim c_n |x|^{-(n-1)/2}e^{|x|},
\quad |x|\rightarrow\infty.
\end{equation}

We recall the following result about the existence and main properties of $\varphi_\lambda$.

\begin{lem}
\label{lem1}
Let $n\geq 2$. There exists a solution $\varphi_\lambda\in C^\infty(\Real^n)$ to
(\ref{eigen-pr}), such that
\begin{equation}
\label{lem1:ineq}
|\varphi_\lambda(x)-\varphi(\lambda x)|\leq C_\alpha\lambda^{\theta},\quad x\in \Real^n,
\quad \lambda\in (0,\alpha/2],
\end{equation}
where $\theta\in (0,1]$ and
$\varphi(x)=\int_{\S^{n-1}}e^{x\cdot\omega} dS_\omega\sim c_n|x|^{-(n-1)/2}e^{|x|},$
$c_n>0,$ as $|x|\rightarrow\infty.$

Moreover, $\vp_\lambda(\: \cdot\:)-\vp(\lambda \: \cdot)$ is a continuous $L^\infty (\Real^n)$ valued
function of $\lambda\in (0,\alpha/2]$ and there exist positive constants $D_0,$ $D_1$ and $\lambda_0$, such that
\begin{equation}
\label{2sided}
D_0 \langle \lambda|x|\rangle^{-(n-1)/2}e^{\lambda|x|}\leq \varphi_\lambda(x)\leq D_1
\langle \lambda|x|\rangle^{-(n-1)/2}e^{\lambda|x|}, \quad x\in\Real^n,
\end{equation}
holds whenever $0<\lambda\leq \lambda_0$.
\end{lem}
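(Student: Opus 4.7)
The plan is to seek $\varphi_\lambda$ as a perturbation of the explicit exponential solution $\varphi(\lambda\,\cdot)$. Since $\Delta\varphi(\lambda x)=\lambda^2\varphi(\lambda x)$ follows immediately from the integral representation (\ref{eigen}), setting $\varphi_\lambda=\varphi(\lambda\,\cdot)+w_\lambda$ reduces (\ref{eigen-pr}) to the linear elliptic problem
\begin{equation*}
(-\Delta_g+\lambda^2)w_\lambda=F_\lambda,\qquad F_\lambda:=(\Delta_g-\Delta)\varphi(\lambda\,\cdot).
\end{equation*}
First I would use (\ref{eigen}) to derive the pointwise bound $|\partial^\gamma\varphi(\lambda x)|\leq C_\gamma\lambda^{|\gamma|}\langle\lambda|x|\rangle^{-(n-1)/2}e^{\lambda|x|}$ for every multi-index $\gamma$. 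Expanding $\Delta_g-\Delta=\sum_{i,j}(\partial_ig_{ij})\partial_j+(g_{ij}-\delta_{ij})\partial_i\partial_j$ and invoking (\ref{g}) with $\lambda\leq\alpha/2$, the exponential decay of $g_{ij}-\delta_{ij}$ and $\nabla g_{ij}$ defeats the growth of $\varphi(\lambda x)$ and yields $|F_\lambda(x)|\leq C_\alpha\lambda\, e^{-\alpha|x|/2}$, so the source is both small as $\lambda\to 0$ and exponentially localized.

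Existence of $w_\lambda\in H^1(\R^n)$ then follows from Lax-Milgram applied to the bilinear form $a_\lambda(u,v)=\int\sum_{i,j}g_{ij}\partial_iu\,\partial_jv+\lambda^2\int uv$, which is coercive for each fixed $\lambda>0$ thanks to the uniform ellipticity of $(g_{ij})$; standard interior elliptic regularity on unit balls upgrades $w_\lambda$ to the regularity claimed for $\varphi_\lambda$. The genuine obstacle is the quantitative estimate (\ref{lem1:ineq}): the coercivity constant of $a_\lambda$ is only $\lambda^2$, so a naive energy estimate loses a factor $\lambda^{-1}$ and merely gives $\|w_\lambda\|_{L^2}=O(\lambda^{-1})$, the wrong sign of power. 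To recover, I would test the equation against $e^{2\beta\langle x\rangle}w_\lambda$ with a small $\beta\in(0,\alpha/4)$, chosen so that the commutator terms generated by the weight can be absorbed into the elliptic part; exploiting the localization of $F_\lambda$ then produces a weighted energy bound of the form $\|e^{\beta\langle\cdot\rangle}w_\lambda\|_{L^2}\leq C_\alpha\lambda^\theta$ for some $\theta\in(0,1]$. A single pass of interior $W^{2,p}$ estimates on balls of unit radius (followed by Sobolev embedding, or alternatively one round of Moser iteration applied to $e^{\beta\langle x\rangle}w_\lambda$) converts this into the pointwise bound (\ref{lem1:ineq}).

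The two-sided bound (\ref{2sided}) is then a short consequence. The unperturbed function satisfies $c_0\langle\lambda|x|\rangle^{-(n-1)/2}e^{\lambda|x|}\leq\varphi(\lambda x)\leq C_0\langle\lambda|x|\rangle^{-(n-1)/2}e^{\lambda|x|}$ uniformly in $x\in\R^n$ and $\lambda>0$, and the quantity $\langle y\rangle^{-(n-1)/2}e^{|y|}$ is continuous and strictly positive on $[0,\infty)$, hence bounded below by a positive constant. Choosing $\lambda_0>0$ with $C_\alpha\lambda_0^\theta\leq c_0/2$ therefore forces $\tfrac{1}{2}\varphi(\lambda x)\leq\varphi_\lambda(x)\leq\tfrac{3}{2}\varphi(\lambda x)$ for $0<\lambda\leq\lambda_0$, which gives (\ref{2sided}) with $D_0:=c_0/2$ and $D_1:=3C_0/2$. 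Finally, the continuity of $\lambda\mapsto w_\lambda$ as an $L^\infty$-valued function is obtained by running the weighted-energy and elliptic-regularity machinery on the difference $w_\lambda-w_{\lambda'}$, whose source $F_\lambda-F_{\lambda'}$ depends continuously on $\lambda$ in the relevant weighted norm. I expect the weighted $L^\infty$ estimate of $w_\lambda$ with a strictly positive power of $\lambda$ to be the only truly delicate step; everything else is a routine application of elliptic theory for the operator $-\Delta_g+\lambda^2$.
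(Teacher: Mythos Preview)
The paper does not actually prove this lemma: its entire proof is the sentence ``See Lemma~2.2 in \cite{WY}.'' So there is no in-paper argument to compare against, and your outline is already far more detailed than what appears here. Your overall strategy---writing $\varphi_\lambda=\varphi(\lambda\,\cdot)+w_\lambda$, reducing to $(-\Delta_g+\lambda^2)w_\lambda=F_\lambda$ with $|F_\lambda|\le C\lambda\,e^{-\alpha|x|/2}$, and then deducing (\ref{2sided}) from (\ref{lem1:ineq})---is natural and almost certainly the scheme used in \cite{WY}.

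The one place where your sketch does not close is precisely the step you flag as delicate. Testing against $e^{2\beta\langle x\rangle}w_\lambda$ with a \emph{fixed} $\beta>0$ and integrating by parts leaves, on the coercive side, a commutator contribution of order $-C\beta^2\int e^{2\beta\langle x\rangle}|w_\lambda|^2$. Once $\lambda\ll\beta$ this cannot be absorbed into the $\lambda^2\int e^{2\beta\langle x\rangle}|w_\lambda|^2$ term, and since $|\nabla\langle x\rangle|$ is merely bounded (not $O(|x|^{-1})$) there is no Hardy-type mechanism to absorb it into the gradient term either. So the weighted energy argument, as you describe it, still only yields $\|e^{\beta\langle\cdot\rangle}w_\lambda\|_{L^2}=O(\lambda^{-1})$---the same bad power as the unweighted estimate. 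What does recover a positive power of $\lambda$ is an integral-equation approach: invert the constant-coefficient part via the free Green's function $G_\lambda$ of $-\Delta+\lambda^2$ and run a contraction (or Neumann series) in a weighted $L^\infty$ space, using that $0\le G_\lambda\le G_0=c_n|x|^{2-n}$ pointwise and uniformly in $\lambda$ for $n\ge 3$ (with a harmless $\log(1/\lambda)$ correction when $n=2$, which is why the statement allows $\theta<1$). This bypasses the $\lambda^{-2}$ loss inherent in the $L^2$ resolvent bound and gives $\|w_\lambda\|_{L^\infty}\le C\lambda^\theta$ directly.
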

\begin{proof}
See Lemma 2.2 in \cite{WY}.
\end{proof}

Given $\lambda_0\in (0,\alpha/2]$ and $q>0$, we also introduce the auxiliary functions
\begin{eqnarray}
\label{aq}
\xi_q (x,t) & = & \int_{0}^{\lambda_0}e^{-\lambda(t+R)}\cosh\lambda t \: \varphi_\lambda(x)\lambda^{q}
d\lambda,\\
\eta _q (x,t,s) & = & \int_{0}^{\lambda_0}e^{-\lambda(t+R)}\frac{\sinh \lambda(t-s)}{\lambda(t-s)}
\: \varphi_\lambda(x)\lambda^{q} d\lambda,
\label{bq}
\end{eqnarray}
for $(x,t)\in \R^n\times \R$ and $s\in\R.$ Useful estimates are collected in the next lemma.

\begin{lem}
\label{lem2}
Let $n\geq 2$. There exists $\lambda_0\in (0,\alpha/2]$, such that the following hold:

(i) if $0<q$, $|x|\leq R$ and $0\leq t$, then
\begin{eqnarray*}
\xi_q (x,t) & \geq & A_0,\\
\eta_q (x,t,0) & \geq & B_0\langle t\rangle^{-1};
\end{eqnarray*}

(ii) if $0<q$, $|x|\leq s+R$ and $0\leq s<t$, then
\begin{eqnarray*}
\eta_q (x,t,s) & \geq & B_1 \langle t\rangle^{-1}\langle s\rangle^{-q};
\end{eqnarray*}

(iii) if $(n-3)/2<q$, $|x|\leq t+R$ and $0<t$, then
\begin{eqnarray*}
\eta_q (x,t,t) & \leq & B_2 \langle t\rangle^{-(n-1)/2}\langle t-|x| \rangle^{(n-3)/2-q}.
\end{eqnarray*}
Here $A_0$ and $B_k$, $k=0,1,2,$ are positive constants depending only on $\alpha$, $q$ and $R$,
while $\langle s\rangle =3+|s|$.
\end{lem}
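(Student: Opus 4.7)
My plan is to substitute the two-sided bound (\ref{2sided}) for $\varphi_\lambda$ into (\ref{aq})--(\ref{bq}), exploit the algebraic identities
$$
e^{-\lambda(t+R)}\cosh(\lambda t)=\tfrac12 e^{-\lambda R}(1+e^{-2\lambda t}),\qquad
e^{-\lambda(t+R)}\sinh\bigl(\lambda(t-s)\bigr)=\tfrac12 e^{-\lambda(s+R)}(1-e^{-2\lambda(t-s)}),
$$
and combine them with the elementary inequality $(1-e^{-x})/x\geq c_0/(1+x)$ for $x\geq 0$. The parameter $\lambda_0\in(0,\alpha/2]$ is fixed at the outset so that Lemma \ref{lem1} applies.

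For (i), the hypothesis $|x|\leq R$ yields $\varphi_\lambda(x)\geq D_0(3+\lambda_0 R)^{-(n-1)/2}$, which is a positive constant. The bound on $\xi_q$ is then immediate by direct integration of $\tfrac12 e^{-\lambda R}\lambda^q$ over $(0,\lambda_0]$. For $\eta_q(x,t,0)$, the second identity and the elementary inequality (with $x=2\lambda t$) give an integrand of order $e^{-\lambda R}\lambda^q/(1+2\lambda t)$; the substitution $\mu=\lambda t$ when $t$ is large, together with a direct estimate when $t$ is bounded (where $(1-e^{-2\lambda t})/(2\lambda t)$ stays near unity), produce the $\langle t\rangle^{-1}$ decay.

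For (ii), the second identity recasts $\eta_q(x,t,s)$ as an integral with weight $e^{-\lambda(s+R)}(1-e^{-2\lambda(t-s)})/(2\lambda(t-s))$, and $|x|\leq s+R$ gives $\varphi_\lambda(x)\geq D_0(3+\lambda(s+R))^{-(n-1)/2}$ after discarding the growing factor $e^{\lambda|x|}$. The product $\langle t\rangle^{-1}\langle s\rangle^{-q}$ cannot be extracted from any single integration interval, which is the technical heart of the lemma: if $s+R\geq 2/\lambda_0$, I will restrict to $\lambda\in[1/(s+R),2/(s+R)]\subset(0,\lambda_0]$, on which every factor besides $(1+2\lambda(t-s))^{-1}\geq c(s+R)/(t+R)$ is of unit order and the interval has length $1/(s+R)$, yielding a contribution of order $(s+R)^{-q-1}\cdot(s+R)/(t+R)=(s+R)^{-q}(t+R)^{-1}$; if $s+R<2/\lambda_0$ then $\langle s\rangle$ is bounded and restricting to $\lambda\in[\lambda_0/2,\lambda_0]$ reduces matters to the argument of (i).

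Part (iii) uses the upper half of (\ref{2sided}) together with $\lim_{s\to t}\sinh(\lambda(t-s))/(\lambda(t-s))=1$ to obtain
$$
\eta_q(x,t,t)\leq D_1\int_0^{\lambda_0}e^{-\lambda\sigma}(3+\lambda|x|)^{-(n-1)/2}\lambda^q\,d\lambda,\qquad \sigma:=t+R-|x|\geq 0.
$$
I will split $\lambda$ at the threshold $1/|x|$, bounding $(3+\lambda|x|)^{-(n-1)/2}$ by $3^{-(n-1)/2}$ on the low piece and by $(\lambda|x|)^{-(n-1)/2}$ on the high piece, and apply the substitution $\mu=\lambda\sigma$ to each. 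The hypothesis $q>(n-3)/2$ is precisely what makes the second integral converge at $\mu=0^+$, giving a contribution of order $|x|^{-(n-1)/2}\sigma^{(n-3)/2-q}$; the first produces order $\sigma^{-q-1}$. Since $|x|+\sigma=t+R$ forces $\max(|x|,\sigma)\geq(t+R)/2$, and $\sigma$ is comparable to $\langle t-|x|\rangle$ up to an $R$-dependent constant, a short case analysis on whether $|x|$ or $\sigma$ dominates assembles the two contributions into the stated bound $\langle t\rangle^{-(n-1)/2}\langle t-|x|\rangle^{(n-3)/2-q}$.
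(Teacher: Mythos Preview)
The paper does not prove this lemma; its entire proof reads ``See Lemma 3.1 in \cite{WY}.'' So there is no in-paper argument to compare against, and your sketch is necessarily more detailed than what the paper offers. Your approach---insert the two-sided bound (\ref{2sided}), rewrite the hyperbolic factors via the two algebraic identities, and then localise the $\lambda$-integral to an interval of length $\sim 1/(s+R)$ for (ii) and split at $\lambda=1/|x|$ for (iii)---is the standard one and is essentially how the cited lemma in \cite{WY} is proved.

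Two points in part (iii) deserve care. First, the assertion that ``$\sigma$ is comparable to $\langle t-|x|\rangle$'' is false as stated: when $|x|\to t+R$ we have $\sigma\to 0$ while $\langle t-|x|\rangle\geq 3$. What is true is that $\langle t-|x|\rangle$ is comparable to $R+\sigma$ (equivalently to $1+\sigma$, with $R$-dependent constants), so your bounds of order $\sigma^{-q-1}$ and $|x|^{-(n-1)/2}\sigma^{(n-3)/2-q}$ must be paired with the trivial bounds $|x|^{-q-1}$ and $|x|^{-(n-1)/2}$ (obtained by dropping $e^{-\lambda\sigma}$) to cover the regime $\sigma\leq 1$. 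Second, in the case $\sigma\geq |x|$ the crude bound $|x|^{-(n-1)/2}\sigma^{(n-3)/2-q}$ on the high piece is not by itself $\leq C\langle t\rangle^{-(n-1)/2}\langle t-|x|\rangle^{(n-3)/2-q}$, since $|x|$ may be much smaller than $\langle t\rangle$; here you must retain the information that the $\mu$-integral starts at $\sigma/|x|\geq 1$ and therefore carries an extra decaying factor (or, equivalently, observe that when $\sigma\geq |x|$ the low-piece bound already dominates and suffices). These are routine refinements of your case analysis rather than new ideas, but the sketch as written glosses over them.
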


\begin{proof}
See Lemma 3.1 in \cite{WY}.
\end{proof}

The following lemma plays a key role in the proof of Theorem\ref{thm:main}.

\begin{lem}
\label{y1y2}
Let $\lambda>0$ and introduce the ordinary differential operators
\[
L_a=\partial_t^2+a(t)\partial_t-\lambda^2,\quad  L_a^\ast =\partial_s^2-\partial_s a(s)-\lambda^2.
\]
The fundamental system of solutions $\{y_1(t,s;\lambda),y_2(t,s;\lambda)\}$, defined through
\begin{eqnarray*}
L_a y_1(t,s;\lambda)= 0, && y_1(s,s;\lambda)=1,\quad \partial_t y_1(s,s;\lambda)=0,\\
L_a y_2(t,s;\lambda)= 0, && y_2(s,s;\lambda)=0,\quad \partial_t y_2(s,s;\lambda)=1,
\end{eqnarray*}
depends continuously on $\lambda$ and satisfies the following estimates, for $t\geq s\geq 0$:
\begin{eqnarray*}
(i) && y_1(t,s;\lambda) \geq e^{-\|a\|_{L^1}}\cosh\lambda(t-s),\\
(ii) && y_2(t,s;\lambda) \geq e^{-2\|a\|_{L^1}}\frac{\sinh \lambda(t-s)}{\lambda}.
\end{eqnarray*}

Moreover, the conjugate equations and initial conditions hold:
\begin{eqnarray*}
(iii) && L_a^\ast y_2(t,s;\lambda)=0,\\
(iv) &&  y_1(t,0;\lambda)=a(0)y_2(t,0;\lambda)-\p_s y_2(t,0;\lambda).
\end{eqnarray*}
\end{lem}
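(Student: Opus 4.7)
The plan is to prove (i) first --- which I expect to be the main obstacle --- and then deduce (ii) from (i) via the Wronskian, leaving (iii) and (iv) to parametric differentiation. The difficulty in (i) is that a direct Gr\"onwall estimate on $y_1$ produces only weak constants (e.g.\ $1-\|a\|_{L^1}$, useless when the $L^1$-norm is large). Extracting the sharp factor $e^{-\|a\|_{L^1}}$ really seems to require working at the level of logarithmic derivatives.

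For (i), I first recast $L_ay_1=0$ in Sturm--Liouville form as $(\rho y_1')'=\lambda^2\rho y_1$, where $\rho(t):=\exp\int_s^t a(\tau)\,d\tau$; since $\rho,\lambda^2>0$ and $y_1(s)=1,\,y_1'(s)=0$, integration immediately gives $y_1\geq1$ and $y_1'\geq0$ on $[s,\infty)$. Thus $\phi:=y_1'/y_1\geq0$ is well-defined and satisfies the Riccati equation
\[
\phi'+\phi^2+a\phi-\lambda^2=0,\qquad \phi(s)=0,
\]
while the reference $\phi_0(t):=\lambda\tanh\lambda(t-s)$, the logarithmic derivative of $\cosh\lambda(t-s)$, solves the same equation with $a\equiv0$. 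Consequently $\psi:=\phi-\phi_0$ obeys the \emph{linear} equation $\psi'+(\phi+\phi_0+a)\psi=-a\phi_0$ with $\psi(s)=0$, and variation of parameters yields
\[
\psi(t)=-\int_s^t a(\tau)\phi_0(\tau)\,\exp\!\left(-\int_\tau^t(\phi+\phi_0+a)\,du\right)d\tau\leq0,
\]
which also produces, as a byproduct, the upper bound $y_1(t)\leq\cosh\lambda(t-s)$. Dropping the nonnegative contributions of $\phi$ and $a$ in the exponent, applying the identity $\phi_0(\tau)\cosh\lambda(\tau-s)=\lambda\sinh\lambda(\tau-s)$ together with the elementary bound $\int_\tau^t du/\cosh\lambda(u-s)\leq(2/\lambda)e^{-\lambda(\tau-s)}$, and then swapping the order of integration, the double integral $\int_s^t|\psi(u)|\,du$ collapses to $\int_s^t a(\tau)(1-e^{-2\lambda(\tau-s)})\,d\tau\leq\|a\|_{L^1}$. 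Since $y_1(t)=\cosh\lambda(t-s)\exp\int_s^t\psi$, part (i) follows.

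For (ii), the Wronskian $W:=y_1y_2'-y_1'y_2$ satisfies $W'=-aW$ with $W(s)=1$, so $W(t)=e^{-A(t)}$ with $A(t):=\int_s^t a$. Dividing by $y_1^2$ and integrating gives $y_2(t)=y_1(t)\int_s^t e^{-A(\tau)}/y_1(\tau)^2\,d\tau$; inserting the upper bound $y_1(\tau)\leq\cosh\lambda(\tau-s)$, the trivial $e^{-A(\tau)}\geq e^{-\|a\|_{L^1}}$, and the identity $\int_s^t d\tau/\cosh^2\lambda(\tau-s)=\tanh\lambda(t-s)/\lambda$, and finally invoking the lower bound from (i) on $y_1(t)$, yields (ii).

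For (iv), differentiating $L_ay_2(\cdot,s)=0$ in $s$ shows $\partial_sy_2$ again solves $L_a(\cdot)=0$ in $t$; differentiating the initial conditions $y_2(s,s)=0$ and $\partial_ty_2(s,s)=1$ gives $\partial_sy_2|_{t=s}=-1$ and $\partial_t\partial_sy_2|_{t=s}=a(s)$, so by uniqueness $\partial_sy_2(t,s)=-y_1(t,s)+a(s)y_2(t,s)$, which at $s=0$ is exactly (iv). The same procedure for $y_1$ produces $\partial_sy_1=-\lambda^2y_2$. Finally, for (iii), I rewrite $L_a^*y_2=\partial_s[\partial_sy_2-a(s)y_2]-\lambda^2y_2$ and substitute $\partial_sy_2-a(s)y_2=-y_1$ and $\partial_sy_1=-\lambda^2y_2$; this form sidesteps any reference to $a'$ and gives $L_a^*y_2=0$. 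Continuous dependence on $\lambda$ is standard for ODEs with parameter.
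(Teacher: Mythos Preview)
Your proof is correct, but the route you take for (i) is considerably more elaborate than the paper's. The paper avoids the Riccati/logarithmic-derivative machinery entirely: after establishing $y_1>0$ and $y_1'\ge 0$ via the Sturm--Liouville form (as you do), it observes the second-order identity
\[
\left(y_1 e^{A(t)}-\int_s^t a(r)y_1 e^{A(r)}\,dr\right)''=\lambda^2 y_1 e^{A(t)}
\]
(where $A(t)=\int_0^t a$), notes that the right-hand side dominates $\lambda^2$ times the bracketed expression since $a,y_1\ge 0$, and compares directly with $\cosh$. Dropping the nonnegative integral on the left then gives $y_1(t)\ge e^{A(s)-A(t)}\cosh\lambda(t-s)$ in one line. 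So your remark that ``extracting the sharp factor $e^{-\|a\|_{L^1}}$ really seems to require working at the level of logarithmic derivatives'' is not borne out --- there is a clean comparison argument at the level of the equation itself. Your Riccati approach is a valid alternative and has the virtue of being systematic, but it costs you the Fubini swap and the $\mathrm{sech}$ estimate where the paper needs nothing beyond positivity.

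For (ii) both proofs coincide (Wronskian, divide by $y_1^2$, integrate). For (iii)--(iv) the paper instead writes $y_2(t,s;\lambda)=(y_1(s)y_2(t)-y_1(t)y_2(s))e^{A(s)}$ with $y_i(\cdot):=y_i(\cdot,0;\lambda)$ and differentiates this explicit formula in $s$; your parametric-differentiation/uniqueness argument is equivalent and arguably cleaner, and you were right to phrase $L_a^\ast y_2$ as $\partial_s[\partial_s y_2-a(s)y_2]-\lambda^2 y_2$ so as to avoid any spurious appeal to $a'$.
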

\begin{proof} See Section~4.
\end{proof}

\section{Proof of Theorem~1.1}

Let $u$ be a weak solution to problem (\ref{main}), defined below, and $\eta_{q}(x,t,t)$ be a test function, defined in Section~2, with $q>-1$. We will show that
\begin{equation}
\label{def:F}
F(t)=\int_{\R^n} u(x,t) \eta_{q}(x,t,t)dx
\end{equation}
satisfies a nonlinear integral inequality which implies finite time blow-up. Our definition of weak solutions
is standard: $(u,u_t)\in C([0,T_\eps), H^1(\Real^n)\times L^2(\Real^n))$ and $\forall \phi\in C_0^\infty(\Real^n\times [0,T_\eps))$ and $t\in (0,T_\eps)$
\begin{eqnarray*}
\nonumber
& & \int u_s(x,t)\phi(x,t) dx-\int u_s(x,0)\phi(x,0) dx\\
\nonumber
& & -\int_0^t \int (u_s(x,s)\phi_{s}(x,s)- g(x)\nabla u(x,s)\cdot\nabla \phi(x,s)-a(s)u_s(x,s)\phi(x,s)) dxds \\
\nonumber
& & =\int_0^\infty \int |u(x,s)|^p \phi(x,s) dxds.
\end{eqnarray*}
In the next result, however, it will be more convenient to work with
\begin{eqnarray}
\nonumber
& & \int (u_s(x,t)\phi(x,t)-u(x,t)\phi_s(x,t)+a(t)u(x,t)\phi(x,t))dx\\
\label{44}
& & -\int (u_s(x,0)\phi(x,0)-u(x,0)\phi_s(x,0)+a(0)u(x,0)\phi(x,0))dx\\
\nonumber
& & +\int_0^t \int u(x,s)(\phi_{ss}(x,s)- \Delta_g\phi(x,s)-(a(s)\phi(x,s))_s)dxds \\
\nonumber
& & =\int_0^\infty \int |u(x,s)|^p \phi(x,s) dxds,
\end{eqnarray}
which follows from integration by parts. We can also use $\phi\in C^\infty(\Real^n\times [0,T_\eps))$, since
$u(\cdot,s)$ is compactly supported for every $s$.

\begin{prop}
\label{prop:identity}
Let the assumptions in Theorem \ref{thm:main} be fulfilled and $q>-1.$
\begin{equation}
\label{final-equal}
\begin{array}{lll}
\d \int_{\R^n}u(x,t) \eta_{q}(x,t,t) dx\\
\qquad \ge
\d \e e^{-\|a\|_{L^1}}\!\!\!\int_{\R^n}\!\!u_0(x)\xi_{q}(x,t)\: dx
+ \e e^{-2\|a\|_{L^1}}t\int_{\R^n}\!\!u_1(x) \eta_{q}(x,t,0) dx\\
\d \qquad \qquad+e^{-2\|a\|_{L^1}}\!\!\!\int_0^t(t-s) \int_{\R^n}|u(x,s)|^p \eta_{q}(x,t,s) dxds
\end{array}
\end{equation}
for all $t\in (0,T_\eps).$
\end{prop}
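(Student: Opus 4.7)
The plan is to apply the weak formulation (\ref{44}) with test functions built from the $\lambda$-dependent eigenfunctions $\varphi_\lambda$ of Lemma \ref{lem1} and the ODE fundamental solution $y_2$ of Lemma \ref{y1y2}, and then to superpose against the weight $e^{-\lambda(t+R)}\lambda^q\,d\lambda$ on $(0,\lambda_0]$. Concretely, for a fixed $t\in(0,T_\eps)$ and each $\lambda\in(0,\lambda_0]$, I would use
\[
\phi(x,s)=y_2(t,s;\lambda)\,\varphi_\lambda(x).
\]

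First I would verify that this $\phi$ kills the bulk integrand in (\ref{44}). Since $\Delta_g\varphi_\lambda=\lambda^2\varphi_\lambda$ and $y_2$ satisfies the adjoint equation $L_a^\ast y_2=0$ in $s$ (Lemma \ref{y1y2}(iii)), a direct computation gives $\phi_{ss}-\Delta_g\phi-(a(s)\phi)_s=\bigl(\partial_s^2 y_2-\partial_s(a\,y_2)-\lambda^2y_2\bigr)\varphi_\lambda=0.$ The boundary terms require more care. At $s=t$: differentiating the identity $y_2(s,s;\lambda)\equiv 0$ and combining with $\partial_t y_2(s,s;\lambda)=1$ forces $\partial_sy_2(t,t;\lambda)=-1$, while $y_2(t,t;\lambda)=0$; hence only the $-u\phi_s$ term survives and contributes $\int u(x,t)\varphi_\lambda(x)\,dx$. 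At $s=0$: identity (iv) of Lemma \ref{y1y2} regroups $a(0)u_0\,y_2(t,0;\lambda)-u_0\,\partial_sy_2(t,0;\lambda)$ into $u_0\,y_1(t,0;\lambda)$. Combining, (\ref{44}) reduces to the exact identity
\begin{align*}
\int u(x,t)\,\varphi_\lambda(x)\,dx
&=\e\int\bigl[u_0(x)\,y_1(t,0;\lambda)+u_1(x)\,y_2(t,0;\lambda)\bigr]\varphi_\lambda(x)\,dx\\
&\quad+\int_0^t\!\!\int|u(x,s)|^p\,y_2(t,s;\lambda)\,\varphi_\lambda(x)\,dx\,ds.
\end{align*}

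Next I would multiply both sides by $e^{-\lambda(t+R)}\lambda^q$ and integrate in $\lambda$ over $(0,\lambda_0]$. On the left, the factor $\sinh\lambda(t-s)/(\lambda(t-s))\to 1$ as $s\to t$ gives $\eta_q(x,t,t)=\int_0^{\lambda_0}e^{-\lambda(t+R)}\varphi_\lambda(x)\lambda^q\,d\lambda$, so the left becomes $\int u(x,t)\,\eta_q(x,t,t)\,dx$. On the right, the lower bounds of Lemma \ref{y1y2}(i)--(ii), together with the nonnegativity of $u_0$, $u_1$ and $|u|^p$ and the positivity of $\varphi_\lambda$ from (\ref{2sided}), allow the replacements $y_1(t,0;\lambda)\geq e^{-\|a\|_{L^1}}\cosh\lambda t$ (producing $\e e^{-\|a\|_{L^1}}\int u_0\,\xi_q(x,t)\,dx$), $y_2(t,0;\lambda)\geq e^{-2\|a\|_{L^1}}\sinh\lambda t/\lambda$ (producing $\e e^{-2\|a\|_{L^1}}t\int u_1\,\eta_q(x,t,0)\,dx$), and $y_2(t,s;\lambda)\geq e^{-2\|a\|_{L^1}}\sinh\lambda(t-s)/\lambda$ (producing $e^{-2\|a\|_{L^1}}\int_0^t(t-s)\int|u|^p\,\eta_q(x,t,s)\,dx\,ds$). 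Assembling these pieces yields exactly (\ref{final-equal}).

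The main subtlety I anticipate is the bookkeeping at the $s=t$ boundary of (\ref{44}): although $\phi(x,t)=0$, it is the derivative value $\partial_sy_2(t,t;\lambda)=-1$ that recovers the left-hand side $\int u(x,t)\varphi_\lambda(x)\,dx$, so one must invoke that $y_2$ is smooth in $s$ and that the extension to $\phi\in C^\infty$ allowed after (\ref{44}) legitimizes using a test function which does not vanish in a neighborhood of $s=t$. A minor additional point is applying Fubini to exchange the $\lambda$-integration with the $x$- and $s$-integrations, which is legitimate because $\varphi_\lambda$ is controlled by (\ref{2sided}) on the compact support of $u(\cdot,s)$ for $s\le t$ and the weight $e^{-\lambda(t+R)}$ dominates the exponential growth $e^{\lambda|x|}$.
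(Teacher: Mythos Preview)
Your proposal is correct and follows essentially the same route as the paper: choose $\phi(x,s)=\varphi_\lambda(x)y_2(t,s;\lambda)$ in (\ref{44}), use Lemma~\ref{y1y2}(iii)--(iv) to eliminate the bulk term and reorganize the $s=0$ boundary terms, apply the lower bounds (i)--(ii), then multiply by $e^{-\lambda(t+R)}\lambda^q$ and integrate over $\lambda\in(0,\lambda_0]$. Your added justification of $\partial_s y_2(t,t;\lambda)=-1$ at the $s=t$ boundary and the Fubini remark are details the paper leaves implicit, but the argument is the same.
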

\begin{proof} We will apply (\ref{44}) to $\phi(x,s)=\vp_\lambda(x)y_2(t,s;\lambda)$, which satisfies
\begin{eqnarray*}
\phi_{ss}(x,s)-\Delta_g\phi(x,s)-(a(s)\phi(x,s))_s &= & 0,\\
a(0)\phi(x,0)-\phi_{s}(x,0)&= & \vp_\lambda(x)y_1(t,0;\lambda),
\end{eqnarray*}
from Lemma~\ref{y1y2} $(iii)$ and $(iv)$, respectively. Then we obtain
\begin{eqnarray*}
\d\int u(x,t)\varphi_{\lambda}(x) dx & = &\e y_1(t,0;\lambda) \int u_0(x)\varphi_{\lambda}(x) dx\\
 & & +\d\e y_2(t,0;\lambda) \int u_1(x)\varphi_{\lambda}(x)dx\\
& & +\int_{0}^{t}y_2(t,s;\lambda)\left(\int |u(x,s)|^p\varphi_{\lambda}(x) dx\right)ds,
\end{eqnarray*}
where the initial conditions are determined by (\ref{data}) and the pair $\{y_1,y_2\}$
is defined in Lemma~\ref{y1y2}. Making use of estimates $(i)$ and $(ii)$ in this lemma, we have that
\begin{eqnarray*}
\d\int u(x,t)\varphi_{\lambda}(x) dx & \geq  &\e e^{-\|a\|_{L^1}}\cosh(\lambda t)\int u_0(x)\varphi_{\lambda}(x) dx\\
 & & +\d\e e^{-2\|a\|_{L^1}}\frac{\sinh \lambda t}{\lambda}\int u_1(x)\varphi_{\lambda}(x)dx\\
& & +e^{-2\|a\|_{L^1}}\int_0^t\frac{\sinh \lambda(t-s)}{\lambda}\left(\int |u(x,s)|^p\varphi_{\lambda}(x) dx\right)ds.
\end{eqnarray*}
The lower bound (\ref{final-equal}) follows from multiplying the above inequality by $\lambda^q e^{-\lambda(t+R)}$, integrating on $[0,\lambda_0]$ and interchanging the order of integration between $\lambda$ and  $x$.
Recalling definitions (\ref{aq}) and (\ref{bq}) for $\xi_q$ and $\eta_q$, we complete the proof.
\end{proof}

Similarly to the proof of Proposition 4.2. in \cite{WY}, we obtain the convenient iteration frame by using
Lemma \ref{lem2}.
\begin{prop}
\label{prop:frame}
Suppose that the assumptions in Theorem \ref{thm:main} are fulfilled and choose
$q=(n-1)/2-1/p.$ If $F(t)$ is defined in (\ref{def:F}), there exists a positive constant $C=C(n,p,q,R,a)$, such that
\begin{equation}
\label{frame}
F(t)  \geq \frac{C}{\langle t\rangle}
\int_0^t \frac{t-s}{\langle s\rangle}\frac{F(s)^p}{
(\log \langle s\rangle)^{p-1}}\: ds
\end{equation}
for all $t\in (0,T_\eps).$
\end{prop}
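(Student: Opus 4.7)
The plan is to follow the strategy of \cite{WY}. Starting from Proposition~\ref{prop:identity} and discarding the two initial-data terms, which are nonnegative by the sign hypotheses on $u_0$ and $u_1$, we keep the reduced lower bound
$$F(t)\geq e^{-2\|a\|_{L^1}}\int_0^t (t-s)\int_{\R^n}|u(x,s)|^p \eta_q(x,t,s)\,dx\,ds.$$
It therefore suffices to establish the pointwise-in-$s$ estimate
$$\int_{\R^n}|u(x,s)|^p\eta_q(x,t,s)\,dx\;\geq\;\frac{C\,F(s)^p}{\langle t\rangle\,\langle s\rangle\,(\log\langle s\rangle)^{p-1}},$$
because inserting this into the reduced bound reproduces exactly \eqref{frame}.

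To manufacture $F(s)^p$ from the spatial integral of $|u|^p\eta_q(x,t,s)$, I would apply H\"older's inequality to the definition \eqref{def:F} of $F(s)$ with the splitting $u(x,s)\eta_q(x,s,s)=[u(x,s)\eta_q(x,t,s)^{1/p}]\cdot[\eta_q(x,s,s)\,\eta_q(x,t,s)^{-1/p}]$, noting that the integrand is supported in the ball $|x|\leq s+R$. This yields
$$F(s)^p\;\leq\;\left(\int_{\R^n}|u(x,s)|^p\eta_q(x,t,s)\,dx\right)W(s,t)^{p-1},\qquad W(s,t)=\int_{|x|\leq s+R}\frac{\eta_q(x,s,s)^{p/(p-1)}}{\eta_q(x,t,s)^{1/(p-1)}}\,dx,$$
and the problem reduces to proving $W(s,t)^{p-1}\leq C\langle t\rangle\langle s\rangle(\log\langle s\rangle)^{p-1}$.

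The heart of the argument is the estimate for $W(s,t)$ via Lemma~\ref{lem2}. I would use the lower bound $(ii)$, $\eta_q(x,t,s)\geq B_1\langle t\rangle^{-1}\langle s\rangle^{-q}$, in the denominator, and the upper bound $(iii)$, $\eta_q(x,s,s)\leq B_2\langle s\rangle^{-(n-1)/2}\langle s-|x|\rangle^{(n-3)/2-q}$, in the numerator. The specific choice $q=(n-1)/2-1/p$ is calibrated precisely so that the exponent of $\langle s-|x|\rangle$ in the integrand collapses to $-1$; passing to radial coordinates then gives $\int_0^{s+R} r^{n-1}/\langle s-r\rangle\,dr\lesssim\langle s\rangle^{n-1}\log\langle s\rangle$, which produces the logarithmic factor in \eqref{frame}. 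The $\langle t\rangle$-factor arises cleanly from the denominator bound and raising to the $p-1$ power delivers the $\langle t\rangle$ on the right of the claim.

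The main obstacle—really a verification at criticality rather than an analytic difficulty—is the arithmetic identity that makes the remaining power of $\langle s\rangle$ come out to exactly~$1$. Tracking exponents through H\"older and the radial integration yields $W(s,t)^{p-1}\leq C\langle t\rangle\langle s\rangle^{(p-1)(n-1)/2-1/p}(\log\langle s\rangle)^{p-1}$, so one requires
$$\frac{(p-1)(n-1)}{2}-\frac{1}{p}=1,$$
which, after clearing denominators, is precisely Strauss' quadratic $\gamma(p,n)=2+(n+1)p-(n-1)p^2=0$ with root $p=p_0(n)$. This critical balance—appearing exactly at the Strauss exponent—is why only a logarithmic loss, not a polynomial one, survives in the iteration frame.
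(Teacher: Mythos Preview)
Your proposal is correct and follows precisely the route the paper indicates: the paper itself gives no detailed argument here, only the remark that the frame is obtained ``similarly to the proof of Proposition~4.2 in \cite{WY}'' by using Lemma~\ref{lem2}, and your H\"older splitting together with the bounds $(ii)$ and $(iii)$ of Lemma~\ref{lem2} is exactly that argument. The arithmetic you record---that $q=(n-1)/2-1/p$ forces the $\langle s-|x|\rangle$-exponent to $-1$ and that the residual $\langle s\rangle$-power equals $1$ precisely when $\gamma(p,n)=0$---is correct and is the crux of the critical case.
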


The finite time blow-up and lifespan estimate (\ref{thm:lifespan}) can now be derived following Sections 4 and 5
in \cite{WY}.

\section{Proof of Lemma~2.3}

Let us recall that $\lambda>0$ and  $L_a=(d/dt)^2+a(t)d/dt-\lambda^2.$
There exists a pair of $C^2$-solutions $\{y_1(t,s;\lambda),y_2(t,s;\lambda)\}$ which depends continuously on $\lambda$ and satisfies
\begin{eqnarray*}
L_a y_1= 0, && y_1(s,s;\lambda)=1,\quad y_1'(s,s;\lambda)=0,\\
L_a y_2= 0, && y_2(s,s;\lambda)=0,\quad y_2'(s,s;\lambda)=1,
\end{eqnarray*}
for  $t\geq s\geq 0.$
We will show that $\{y_1(t,s;\lambda),y_2(t,s;\lambda)\}$ behaves similarly to the fundamental system of $L_0$, that is
$\{\cosh \lambda (t-s),\lambda^{-1}\sinh \lambda (t-s)\}$, as $t-s\rightarrow\infty$ and $\lambda\rightarrow 0.$
Our proof gives two-sided bounds and relies only on three identities:
\begin{eqnarray}
\label{id1}
&&(y_1'e^{A(t)})'=\lambda^2 y_1e^{A(t)},\ \hbox{ where } \  A(t)=\int_0^t a(r)dr,\\
\label{id2}
&&\left(y_1 e^{A(t)}-\int_s^t a(r)y_1e^{A(r)}dr \right)''=\lambda^2 y_1 e^{A(t)},\\
\label{id3}
&&y_2'y_1-y_2y_1'=e^{A(s)-A(t)} \ \hbox{ or } \ \left(\frac{y_2}{y_1}\right)'=\frac{e^{A(s)-A(t)}}{y_1^2}.
\end{eqnarray}

To verify claim $(i)$, we observe that $y_1(t_0,s;\lambda)=0$ at some $t_0>s$ leads to a contradiction:
if $t_0$ is the first such number, then $y_1(t,s;\lambda)\geq 0$ for $t\in [s,t_0]$ and (\ref{id1}) imply that
\[
y_1'(t,s;\lambda)e^{A(t)}=\lambda^2\int_s^t y_1(r,s;\lambda)e^{A(r)}dr \geq 0,  \hbox{ so }
y_1'(t,s;\lambda)\geq 0  \hbox{ for }  t\in [s,t_0].
\]
Hence, $y_1(t,s;\lambda)$ is increasing on $[s,t_0]$ and $0=y_1(t_0,s;\lambda)\geq y_1(s,s;\lambda)=1$ can not hold.
The positivity of $y_1(t,s;\lambda)$ also yields, through (\ref{id1}), the positivity of its derivative:
$y_1'(t,s;\lambda)\geq 0$ for all $t\geq s.$

We can now derive an upper bound on $y_1$ using that $y_1''=\lambda^2 y_1-ay_1'\leq \lambda^2y_1$ and
$y_1(s,s;\lambda)=1$, $y_1'(s,s;\lambda)=0$:
\begin{equation}
\label{y1<}
y_1(t,s;\lambda)\leq \cosh\lambda(t-s),\quad t\geq s.
\end{equation}

The lower bound on $y_1$ is a consequence of (\ref{id2}) and the positivity of $y_1(t,s;\lambda)$:
\[
\left(y_1 e^{A(t)}-\int_s^t a(r)y_1e^{A(r)}dr \right)'' \geq \lambda^2\left(y_1 e^{A(t)}-\int_s^t a(r)y_1e^{A(r)}dr \right).
\]
Combining this inequality with the initial values at $t=s$,
\begin{eqnarray*}
\left.\left(y_1 e^{A(t)}-\int_s^t a(r)y_1e^{A(r)}dr \right)\right|_{t=s} &= & e^{A(s)},\\
\left.\frac{d}{dt}\left(y_1 e^{A(t)}-\int_s^t a(r)y_1e^{A(r)}dr \right)\right|_{t=s} &= & 0,
\end{eqnarray*}
we obtain that
\[
y_1(t,s;\lambda)e^{A(t)}-\int_s^t a(\tau)y_1e^{A(\tau)}d\tau \geq e^{A(s)}\cosh\lambda(t-s).
\]
After simplifying,
\begin{equation}
\label{y1>}
y_1(t,s;\lambda) \geq e^{A(s)-A(t)}\cosh\lambda(t-s),\quad t\geq s.
\end{equation}
Since $A(s)-A(t)\geq -\|a\|_{L^1}$ and (\ref{y1<}) holds, claim $(i)$ follows:
\begin{equation}
\label{>y1>}
\cosh\lambda(t-s)\geq y_1(t,s;\lambda) \geq e^{-\|a\|_{L^1}}\cosh\lambda(t-s).
\end{equation}

To check claim $(ii)$, we combine (\ref{y1<}), (\ref{y1>}) and identity (\ref{id3}):
\[
\frac{e^{A(t)-A(s)}}{\cosh^2\lambda(t-s)} \geq \left(\frac{y_2}{y_1}\right)'\geq \frac{e^{A(s)-A(t)}}{\cosh^2\lambda(t-s)}.
\]
Using $A(s)-A(t)\geq -\|a\|_{L^1}$ and integration on $[s,t]$, we have that
\[
\frac{e^{\|a\|_{L^1}}}{\lambda}\tanh \lambda(t-s) \geq \frac{y_2(t,s;\lambda)}{y_1(t,s;\lambda)} \geq
\frac{e^{-\|a\|_{L^1}}}{\lambda} \tanh\lambda(t-s).
\]
The final result follows from (\ref{>y1>}):
\[
e^{\|a\|_{L^1}}\frac{\sinh \lambda(t-s)}{\lambda} \geq y_2(t,s;\lambda) \geq
e^{-2\|a\|_{L^1}}\frac{\sinh \lambda(t-s)}{\lambda}.
\]

Finally, we will show the equalities in $(iii)$ and $(iv).$
Set $y_1(t):=y_1(t,0;\lambda)$ and $y_2(t):=y_2(t,0;\lambda)$. It easy to see that
\[
y_2(t,s,\lambda)
=\d\frac{y_1(t)y_2(s)-y_1(s)y_2(t)}{y'_1(s)y_2(s)-y_1(s)y'_2(s)}
=\d (y_1(s)y_2(t)-y_1(t)y_2(s))e^{A(s)}.
\]
Thus, we can calculate $\partial_s^i y_2(t,s,\lambda)$ with $i=1,2$ as follows:
\begin{eqnarray}
\nonumber
\ \ \p_sy_2(t,s;\lambda) &= & (y'_1(s)y_2(t)-y_1(t)y'_2(s))e^{A(s)}+a(s)y_2(t,s;\lambda),\\
\label{derivative-y_2}\\
\ \ \p_s^2y_2(t,s;\lambda) &=&(y''_1(s)y_2(t)-y_1(t)y''_2(s))e^{A(s)}\nonumber\\
                       & &+a(s)(y'_1(s)y_2(t)-y_1(t)y'_2(s))e^{A(s)}+\p_s(a(s)y_2(t,s;\lambda)).\nonumber
\end{eqnarray}
Noticing that $y_i(s)$ with $i=1,2$ satisfy the differential equation $L_ay_i(s)=0$, we get $(iii)$.

To derive $(iv)$, we just set $s=0$ in (\ref{derivative-y_2}) for $\p_sy_2(t,s;\lambda)$ and use the
initial conditions for $y_i(s)$. The proof is complete.

\end{document}